\documentclass[12pt]{amsart}
\usepackage{amsmath, amsfonts, yfonts, amssymb, amscd, epsfig}

\oddsidemargin .2in \evensidemargin .2in
\topmargin -.2in
\setlength{\textheight}{8.5in}
\setlength{\textwidth}{6in}

\theoremstyle{plain}
\newtheorem{thm}{Theorem}[section]
\newtheorem{lem}[thm]{Lemma}

\newcommand{\R}{{\mathcal R}}

\def \PO {{\mathbb{C}{P}}^{1}}
\def \R {\mathbb{R}}

\def \Sig{\Sigma}

\def \x {\times}
\def \- {\setminus}

\def\dfn#1{{\em #1}}

\title[Stein fillings]{A note on Stein fillings \\ of contact manifolds}

\author{Anar Akhmedov}

\author{John B. Etnyre}

\author{Thomas E. Mark}

\author{Ivan Smith}


\begin{document}

\begin{abstract}
In this note we construct infinitely many distinct simply connected Stein fillings of a certain infinite family of contact 3--manifolds. 
\end{abstract}

\maketitle

\section{Introduction}

Given a contact 3--manifold $(M,\xi)$ it is useful to understand the types of fillings it has. In
particular, Stein and strong fillings of $(M,\xi)$ can be used in symplectic cut-and-paste
procedures. 
Recall $X$ is a \dfn{Stein filling} of $(M,\xi)$ if $X$
is the sub-level set of a plurisubharmonic function on a Stein surface and the contact structure
induced on $\partial X$ by complex tangencies is contactomorphic to $(M,\xi).$ Many early results concerning Stein fillings pointed to a finiteness result.
For example, Eliashberg \cite{Eliashberg90b} showed the tight contact structure on $S^3$ has a unique Stein filling. McDuff \cite{McDuff90} and Lisca \cite{Lisca04} have shown that all tight contact structures on Lens spaces have a finite number of Stein fillings. Several other uniqueness results are known \cite{OhtaOno03, OhtaOno05, Schoenenberger05}. However, recently Ozbagci and Stipsicz \cite{OzbagciStipsicz04a} and independently Smith \cite{Smith01} have shown that certain contact structures have infinitely many Stein fillings. The fillings in these examples had non-trivial fundamental group (they were distinguished by torsion in the first homology group). As various constructions in 4--manifold topology are simpler when the fundamental group is not controlled it was natural to ask if there is a similar non-finiteness result for simply connected Stein fillings. Our main result shows there is.
\begin{thm}
There is a sequence of distinct contact manifolds $(M_i, \xi_i),$ each of which has an infinite number of homeomorphic but non-diffeomorphic, simply connected Stein fillings. 
\end{thm}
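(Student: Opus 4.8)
The plan is to work entirely on the filling side, using the theorem of Loi--Piergallini and Akbulut--Ozbagci that a compact $4$--manifold $W$ is a Stein filling of $(M,\xi)$ precisely when $W$ admits a positive allowable Lefschetz fibration (PALF) over the disk whose boundary open book supports $\xi$. It therefore suffices to produce, for each $i$, one PALF $W_i\to D^2$ together with an infinite family $W_{i,n}\to D^2$ ($n\ge 1$) all having the same boundary open book as $W_i$, such that every $W_{i,n}$ is simply connected, the $W_{i,n}$ are pairwise homeomorphic, and the $W_{i,n}$ are pairwise non-diffeomorphic. The infinite family for fixed $i$ will come from Fintushel--Stern knot surgery performed in the \emph{interior} of the single PALF $W_i$, and the sequence over $i$ from varying the seed.

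I would first build the seed. Starting from a relatively minimal Lefschetz fibration of fiber genus $\ge 2$ over $S^2$ --- taking an iterated fiber sum, say, so that the total space $Z_i$ is a minimal symplectic $4$--manifold with $b_2^+>1$, hence $SW_{Z_i}\neq 0$ by Taubes --- one removes a neighbourhood of a section and restricts over a disk containing all critical values to obtain a PALF $W_i$, so that $Z_i=W_i\cup Y_i$ with $Y_i$ a fixed complementary piece and $\partial W_i=:(M_i,\xi_i)$. Along the way one arranges an embedded square--zero torus $T\subset\operatorname{int} W_i$ that is homologically essential in $Z_i$, has simply connected complement, and sits in a standard local model (a cusp neighbourhood, or a neighbourhood arising from a fiber sum) to which the knot surgery formula applies. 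Now fix fibred knots $K_n$ whose monodromies are products of \emph{non-separating} right-handed Dehn twists and whose Alexander polynomials $\Delta_{K_n}$ are pairwise distinct --- for instance the torus knots $T(2,2n+1)$, whose monodromy is a chain of $2n$ Dehn twists on $\Sigma_n^1$. Set $W_{i,n}:=(W_i\setminus\nu T)\cup\bigl((S^3\setminus\nu K_n)\times S^1\bigr)$. By Fintushel--Stern this still carries a Lefschetz fibration, and the choice of $K_n$ keeps all vanishing cycles non-separating, so $W_{i,n}$ is again a PALF. The surgery is supported in $\operatorname{int} W_i$, so $\partial W_{i,n}=\partial W_i$ as a smooth manifold and the open book still supports $\xi_i$ (the symplectic form is unchanged near the boundary, or: the genus increase is a sequence of positive stabilizations, which by Giroux do not change the supported contact structure). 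Since $T$ has simply connected complement, $\pi_1(W_{i,n})=\pi_1(W_i\setminus\nu T)=\pi_1(W_i)=1$, and the surgery preserves the intersection form, $\chi$ and $\sigma$; so the $W_{i,n}$ are all homeomorphic rel boundary by the relative version of Freedman's theorem (Boyer).

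To separate their smooth types, note that because $T\subset\operatorname{int} W_i$ the surgery commutes with the decomposition $Z_i=W_i\cup Y_i$, so $Z_{i,n}:=W_{i,n}\cup Y_i$ is exactly the knot surgery $(Z_i)_{K_n}$ along $T$. The knot surgery formula $SW_{Z_{i,n}}=SW_{Z_i}\cdot\Delta_{K_n}(t^2)$, together with $SW_{Z_i}\neq 0$, $[T]\neq 0$, and the distinctness of the $\Delta_{K_n}$, shows the $Z_{i,n}$ have pairwise distinct Seiberg--Witten invariants; hence the $Z_{i,n}$, and therefore the $W_{i,n}$, are pairwise non-diffeomorphic. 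Finally, taking $Z_i$ to be, say, the fiber sum of $i$ copies of a fixed Lefschetz fibration changes the page and monodromy of $\partial W_i$, yielding infinitely many distinct contact manifolds $(M_i,\xi_i)$ --- distinguished, if need be, by $H_1(M_i)$ or by forced differences in $\chi,\sigma$ of their fillings --- which proves the theorem.

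I expect the main obstacle to be the seed construction: producing a single PALF that is simply connected, opens up from a closed Lefschetz fibration with $b_2^+>1$ and non-trivial Seiberg--Witten invariant, and contains in its interior a square--zero torus $T$ that is at once disjoint from the section used to open it up, homologically essential, of simply connected complement, and sitting in a model in which the knot surgery formula applies --- and then checking that knot surgery with the chosen fibred knots genuinely preserves the Stein property (not merely the weaker property of being a symplectic filling) and leaves the boundary contact structure unchanged. Reconciling the roles of the section, the surgery torus, and the $\pi_1$-- and Seiberg--Witten--bookkeeping is the delicate point.
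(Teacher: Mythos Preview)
Your outline has the right architecture---build Stein fillings as complements in closed Lefschetz fibrations, vary by knot surgery, distinguish via Seiberg--Witten on the closed manifolds, match homeomorphism types via Boyer---but there are two genuine gaps, one of which you do not flag.

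\textbf{The unacknowledged gap: the extension step.} You write ``hence the $Z_{i,n}$, and therefore the $W_{i,n}$, are pairwise non-diffeomorphic.'' That ``therefore'' is not free. If $\phi\colon W_{i,n}\to W_{i,m}$ is a diffeomorphism, you only get $Z_{i,n}\cong Z_{i,m}$ provided $\phi|_{\partial}$ extends over the cap $Y_i$. A priori $\phi|_{\partial}$ could be an exotic self-diffeomorphism of the boundary that does not extend, and then nothing forces the closed manifolds to be diffeomorphic. In the paper this is exactly the content of Lemma~\ref{lem:extend}: the cap is the plumbing $X_{g,n}$ of $\Sigma_g\times D^2$ with a $(-n)$--disk bundle over $S^2$, and one proves by hand (Seifert-fibered techniques plus Bonahon's classification of diffeotopies of lens spaces) that every orientation-preserving self-diffeomorphism of $\partial X_{g,n}$ extends over $X_{g,n}$. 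Without such a lemma your non-diffeomorphism conclusion is unjustified.

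\textbf{The gap you do flag, but which is worse than you say.} You want a square--zero torus $T\subset\operatorname{int}W_i$ on which knot surgery both (a) leaves $\partial W_i$ untouched and (b) produces a PALF via Fintushel--Stern. These are in tension. The Fintushel--Stern Lefschetz fibration on $E(n)_K$ arises because the surgery torus is the elliptic fiber, which is a \emph{bisection} of the horizontal fibration: it meets every horizontal fiber, in particular the one you removed to form $W_i$, so it is not contained in $\operatorname{int}W_i$. Conversely, a torus genuinely in $\operatorname{int}W_i$ and disjoint from the removed fiber is not of the type to which the Fintushel--Stern fibration result applies; you would get a strong symplectic filling, but there is no general mechanism producing a PALF, and the ``genus increase is a sequence of positive stabilizations'' remark does not apply (positive stabilization changes the monodromy, not just the page genus). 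The paper sidesteps this entirely: it performs knot surgery on the \emph{closed} manifold $E(n)$ along the elliptic fiber, obtains a genus $2g+n-1$ Lefschetz fibration on $E(n)_K$ by Fintushel--Stern, and only then removes a regular fiber together with a $(-2)$--sphere section to get the Stein filling. This also forces the knots to have a \emph{fixed} genus $g$ (your torus knots $T(2,2n+1)$ have genus $n$, varying with $n$); the paper invokes Kanenobu to find infinitely many fibered knots of a given genus $g>1$ with distinct Alexander polynomials, so that the fiber genus, and hence the boundary open book and contact structure, is constant across the family.
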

These examples are a simple consequence of Fintushel and Stern's work \cite{FintushelStern04} but fill a gap in the literature concerning Stein fillings of contact structures. We point out that in combination with Ozbagci and Stipsicz's work \cite{OzbagciStipsicz04a} these manifolds $(M_i,\xi_i)$ have infinitely many homeomorphism types of  Stein fillings and some homeomorphism types of fillings have infinitely many diffeomorphism types of Stein fillings. One issue that is tantalizingly left open is whether or not there is a contact 3--manifold whose Stein fillings realize infinitely many values for their Euler characteristics $\chi,$ or realize infinitely many pairs of values for $(\chi,\sigma),$ where $\sigma$ is the signature. Given any natural number $n$ one may easily check that the construction below will provide a contact manifold $(M,\xi)$ realizing $n$ distinct $(\chi,\sigma)$ pairs for its Stein fillings.

Acknowledgments: The authors thank Selman Akbulut, Ciprian Manolescu, Peter Ozsv\'ath and Ron Stern for useful discussions in the preparation of this note. AA was partially supported by NSF grant DMS-0244663. JE was partially supported by NSF grants DMS-0707509 and 0244663.
\section {Knot Sugery and Lefschetz Fibrations on $E(n)_K$}

\subsection{Lefschetz fibrations on $E(n)$}
Recall that the elliptic surface $E(n)$ can be discribed as the desingularization of a double branched cover. Specifically, consider the double branched cover of $\PO\times\PO$ whose branch set $B_{2,n}$ is the union of four disjoint copies of $\PO\x \{{\rm{pt}}\}$ and  $2n$ disjoint copies of $\{{\rm{pt}}\}\x \PO$. This branch cover has $8n$ singular points corresponding to the intersections of horizontal and vertical lines in the branch set $B_{2,n}$. Each singular point is easily seen to be a cone on $\R P^3.$ After desingularizing the above space, that is removing a neighborhood of the singular points and replacing them with unit cotangent bundles over $S^2$ (which is of course a $D^2$ bundle over $S^2$ with normal Euler number $-2$),  one obtains $E(n)$. 

The horizontal and vertical fibrations of $\PO\times\PO$ pull back to give fibrations of $E(n)$ over $\PO$. A generic fiber of the vertical fibration is the double cover of $S^2$, branched over $4$ points. Thus a generic fiber will be a torus and the fibration is an elliptic fibration on $E(n)$. The generic fiber of the horizontal fibration is the double cover of $S^2$, branched over $2n$ points, which is a genus $n-1$ fibration on $E(n)$. This fibration has $4$ singular fibers which correspond to preimages of the four copies of $S^{2}\times {pt}$'s in the branch set together with the spheres of self-intersection $-2$ coming from the desingularization.  While this fibration is not a Lefschetz fibration it was shown in \cite{FintushelStern04} that it can be slightly deformed near the singular fibers to become a Lefschetz fibration.  Notice that the generic fiber of the horizontal fibration $\Sig_{n-1}$ intersects a generic fiber $F$ of the elliptic fibration in two points.  

\subsection{Lefschetz Fibrations on $E(n)_K$}
Let $K$ be a fibered knot of genus $g$, and $F$ be  a generic torus fiber $F$ of $E(n)$. Then the knot surgered elliptic surface is a manifold 
\[
E(n)_K = (E(n)\- (F\x D^2)) \cup (S^1\x (S^3\- N(K)),
\] 
where each normal $2$-disk to $F$ is replaced by a fiber of the fibration of $S^3\- N(K)$ over $S^1$. Since $F$ intersects each generic horizontal fiber twice, we get an induced Lefschetz fibration 
\[ 
h: E(n)_K\to\PO
\]  
with fiber genus  $2g+n-1$.

\begin{lem}\cite{FintushelStern04} 
If $K$ is a fibered knot whose fiber has genus
$g$, then $E(n)_K$  admits a locally holomorphic fibration (over $\PO$) of genus $2g+n-1$ which has exactly four singular fibers. Furthermore, this fibration can be deformed locally to be Lefschetz.
\end{lem}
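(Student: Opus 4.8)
The plan is to extend the horizontal genus-$(n-1)$ fibration $h_0\colon E(n)\to\PO$ across the knot-surgery region using the fact that $K$ is fibered, then read off the generic fibre and count the singular ones; the Lefschetz property is recovered by the same local perturbation already used for $h_0$ on $E(n)$.

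First I would pin down the two local pictures that must be glued. For the torus $F$, take a generic fibre of the elliptic fibration, i.e.\ the preimage in $E(n)$ of a generic vertical $\{\mathrm{pt}\}\times\PO$; since a generic vertical line meets $B_{2,n}$ transversally in four points and misses the $8n$ cone points, $F$ is a smooth torus disjoint from the desingularization spheres, with $F\cdot\Sig_{n-1}=2$, so $h_0|_F\colon F\to\PO$ is a two-fold branched cover. The crucial point is that the four branch values of $h_0|_F$ coincide with the four critical values of $h_0$: the branch points of $F\to\{\mathrm{pt}\}\times\PO$ are the intersections of that vertical line with the four horizontal lines of $B_{2,n}$, and those four horizontal lines are exactly the four critical fibres of $h_0$. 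Fix a tubular neighbourhood $\nu F\cong F\times D^2$; over a small disc $U$ in $\PO$ missing the four critical values, $h_0^{-1}(U)\cap\nu F$ is a disjoint pair of $D^2$-bundles, one over each sheet of $h_0|_F$, degenerating to a single such bundle over a disc about a critical value. On the other side, since $K$ is fibered of genus $g$ the complement $S^3\setminus N(K)$ fibers over $S^1$ with fibre the once-punctured surface $\Sig_{g,1}$ and $\partial\Sig_{g,1}=\lambda_K$; hence $S^1\times(S^3\setminus N(K))$ fibers over $T^2$ with fibre $\Sig_{g,1}$, restricting on the boundary three-torus to the product fibration by the curves $\{\mathrm{pt}\}\times\lambda_K$.

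Next I would glue and extend. Using the standard knot-surgery identification of $\partial\big(S^1\times(S^3\setminus N(K))\big)$ with $\partial\nu F$ --- the one carrying $\{\mathrm{pt}\}\times\lambda_K$ to the meridian $\partial D^2$ of $F$ and the two remaining circle directions to a basis of $H_1(F)$ --- the $\Sig_{g,1}$-fibration over $T^2\cong F$ matches, along the common boundary, the restriction of $h_0$ followed by $h_0|_F\colon F\to\PO$. Postcomposing the $T^2$-fibration with $h_0|_F$ thus produces a fibration $h\colon E(n)_K\to\PO$ extending $h_0$ on $E(n)\setminus\nu F$; it is locally holomorphic because it is built from the holomorphic $h_0$ and the Milnor fibration of $K$, so each fibre has a neighbourhood modelled on a holomorphic fibration. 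To identify the fibres: over a value $b$ not among the four critical values, the sheet count gives
\[
h^{-1}(b)=\big(\Sig_{n-1}\setminus(D^2\sqcup D^2)\big)\cup_\partial(\Sig_{g,1}\sqcup\Sig_{g,1}),
\]
a closed surface of genus $(n-1)+g+g=2g+n-1$; over each of the four critical values the branch point of $h_0|_F$ sits over that value too, so only one copy of $\Sig_{g,1}$ is glued onto the already singular fibre of $h_0$, yielding a (non-generic) singular fibre. Since $S^1\times(S^3\setminus N(K))\to T^2$ is a genuine bundle there are no further singular fibres, so $h$ has exactly four. Finally $h$ is not yet Lefschetz, but, just as for $h_0$ on $E(n)$, one perturbs it in small neighbourhoods of the four singular fibres --- a local modification away from the bundle part --- to obtain a genuine Lefschetz fibration \cite{FintushelStern04}.

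The step I expect to be the main obstacle is the gluing: verifying that the standard knot-surgery identification of the two boundary three-tori is actually compatible with the two fibration structures --- equivalently, that the Milnor fibre of $K$ caps off the normal disc of $F$ and that the two ``horizontal'' circle directions are matched correctly. Once the curves $\mu_K$, $\lambda_K$, the meridian of $F$ and the two $F$-directions are made to correspond, the Euler-characteristic computation of the generic fibre and the count of singular fibres are routine, and the Lefschetz deformation is exactly the local statement already invoked for $E(n)$.
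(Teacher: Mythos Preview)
The paper does not supply its own proof of this lemma: it is stated with the citation \cite{FintushelStern04} and no proof environment follows. Your proposal is a correct outline of the Fintushel--Stern construction, including the key observation that the branch values of $h_0|_F$ are exactly the four critical values of $h_0$, the resulting genus count $2g+n-1$, and the fact that all singular fibres of $h$ come from the original four. One phrasing to tighten: the claim of local holomorphicity is not really because the Milnor fibration is holomorphic (it is just smooth), but rather because on the glued-in piece $S^1\times(S^3\setminus N(K))\to T^2$ is an honest smooth bundle, hence locally trivial and in particular locally modelled on a holomorphic projection, while near the four singular fibres you are inside $E(n)\setminus\nu F$ where $h_0$ was already locally holomorphic.
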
 

We also note the following fact about these Lefschetz fibrations.
\begin{lem}\label{lem:construction}
Let $K$ be a fibered knot of genus $g$. Then for any $n,$ the Lefschetz fibration $E(n)_{K}$ admits a sphere section $S$ of self-intersection $-2$. Moreover, $E(n)_K\setminus S$ is simply connected and still has a sphere section.
\end{lem}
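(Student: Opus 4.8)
The plan is to write the section down explicitly inside $E(n)$, using the double branched cover model, and then check that it survives both the knot surgery and the Fintushel--Stern perturbation to a Lefschetz fibration. Recall that the genus $n-1$ fibration $h$ on $E(n)$ is pulled back from the projection $\PO\times\PO\to\PO$ whose fibres are the horizontal lines $\PO\times\{{\rm pt}\}$, and that each of the $2n$ vertical branch lines $V_i=\{{\rm pt}\}\times\PO$ is a section of this projection. Since $V_i$ lies in the branch set $B_{2,n}$, its preimage under the double cover is a single $2$--sphere $\tilde V_i$ mapped homeomorphically onto $V_i$, hence a section of the (unperturbed) fibration of the double cover over $\PO$. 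Now $V_i$ meets $B_{2,n}$ in exactly four points, namely its intersections with the four horizontal branch lines, and these are four of the $8n$ nodes of the cover. Resolving an $A_1$ node introduces a $(-2)$--sphere and drops the (rational) self--intersection of a smooth branch through the node by $1/2$, and $(\tilde V_i)^2=\tfrac12(V_i)^2=0$ on the singular cover, so the proper transform $\hat V_i\subset E(n)$ is an embedded $2$--sphere with $[\hat V_i]^2=-2$. Because the Fintushel--Stern perturbation is supported near the four singular fibres, one checks it can be taken to keep each $\hat V_i$ a section.

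Next I would transfer this picture to $E(n)_K$. Choose the torus $F$ used in the knot surgery to be a generic elliptic fibre, i.e.\ the preimage of a generic vertical line $\{x_0\}\times\PO$ with $x_0$ away from the $2n$ points defining the $V_i$. Then $F$ is disjoint from every $\hat V_i$ and from all $8n$ exceptional spheres, so for a small enough tubular neighbourhood $\nu F$ we have $\hat V_i\subset E(n)\setminus\nu F\subset E(n)_K$; and since $h$ is unchanged away from $\nu F$, each $\hat V_i$ is a $(-2)$--sphere section of the Lefschetz fibration $h\colon E(n)_K\to\PO$. Set $S=\hat V_1$. Distinct vertical lines are disjoint and run through disjoint sets of nodes, so $\hat V_1,\dots,\hat V_{2n}$ are pairwise disjoint; as $2n\ge 2$, the sphere section $\hat V_2$ lies in $E(n)_K\setminus S$, which gives the ``still has a sphere section'' clause.

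The substantive step is that $E(n)_K\setminus S$ is simply connected. Here $E(n)_K$ is simply connected by \cite{FintushelStern04}, and $\partial\nu S$ is the circle bundle over $S^2$ of Euler number $-2$, i.e.\ $\R P^3$, with $\pi_1=\Z/2$ generated by a meridian $\mu_S$ of $S$; so van Kampen shows $\pi_1(E(n)_K\setminus S)$ is normally generated by $\mu_S$, and a priori one only knows the group is generated by an element of order dividing two. To see that $\mu_S$ is nullhomotopic in the complement, use the exceptional $(-2)$--sphere $E_0$ lying over one of the four nodes on $V_1$: from the local model of the resolution, $E_0$ meets $S=\hat V_1$ transversally in exactly one point and is otherwise disjoint from $S$, and $E_0$ is disjoint from $\nu F$, so $E_0\subset E(n)_K$. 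Deleting from $E_0$ a small disk about the point $S\cap E_0$ produces an embedded disk in $E(n)_K\setminus\nu S$ bounding $\mu_S$. Hence $\pi_1(E(n)_K\setminus S)=1$.

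I expect the only real obstacle to be this last point — producing a sphere that hits the section once and lies off the surgery region — together with the bookkeeping that $\hat V_1$, $\hat V_2$, $E_0$, $F$ and their mutual disjointness all behave as claimed under both the knot surgery and the perturbation to a Lefschetz fibration; both are handled by choosing $F$ generically and noting that the perturbation is local near the four singular fibres. The self--intersection computation for the $A_1$ resolutions is cleanest phrased, as above, in terms of $\mathbb Q$--divisors on the singular double cover.
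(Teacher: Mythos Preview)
Your proof is correct and follows essentially the same approach as the paper: both take the proper transform of a vertical branch line as the section $S$, use one of the desingularizing $(-2)$--spheres meeting $S$ once to null-homotope the meridian, observe that the knot surgery region can be chosen disjoint from these spheres, and take a second vertical branch line for the additional section. Your version is more explicit in two places --- you actually compute $[\hat V_i]^2=-2$ via the $\mathbb{Q}$--divisor pullback to the resolution (the paper simply asserts the value), and you flag the need to check that the Fintushel--Stern perturbation to a Lefschetz fibration can be arranged to preserve the sections --- but the underlying argument is identical.
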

\begin{proof}
Let $S'$ be one of the vertical spheres $\{{\rm{pt}}\}\x \PO$ in $\PO\times\PO$ that is part of the branch locus in the description of $E(n)$ above. In the double branched cover $S'$ lifts to a sphere $S''$ and after desingularizing to obtain $E(n)$ we get a sphere $S$ that intersects each of four of the desingularizing $D^2$ bundles over $S^2$ in a fiber. It is easy to see that $S$ is a section of the genus $n-1$ Lefschetz fibration of $E(n)$ over $\PO$. Moreover, since $S$ intersects one of the desingularizing $S^2$'s in a point, its meridian is null-homotopic in the complement of $S.$ Thus, as $E(n)$ is simply connected, the fundamental group of the complement of $S,$ which is normally generated by its meridian, is trivial.

When normal summing $S^1\times S^3$ to $E(n)$ to construct $E(n)_K,$ the sphere $S$ and the desingularizing $S^2$ are not affected. Thus $S$ is still a section with simply connected complement. Lastly, taking another vertical sphere in the branch locus provides a section that is disjoint from $S.$
\end{proof}

\section{Extensions of diffeomorphisms over plumbings}
Let $X_{g,n}$ be the 4-manifold obtained by plumbing $\Sigma_g\times D^2$ to a $D^2$-bundle over $S^2$ with Euler number $-n.$ The main result of this section is the following extension result.
\begin{lem}\label{lem:extend}
Any orientation preserving diffeomorphism of $\partial X_{g,n}$ extends over $X_{g,n}.$ 
\end{lem}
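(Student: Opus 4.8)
The plan is to identify $\partial X_{g,n}$ explicitly, reduce an arbitrary boundary diffeomorphism to a short list of model diffeomorphisms using the structure theory of mapping class groups of $3$--manifolds, and then extend each model over $X_{g,n}$ by hand. I would first observe that $X_{g,n}$ is $\Sigma_g\times D^2$ with a single $2$--handle attached along a fibre $c=\{p\}\times S^1\subset\Sigma_g\times S^1=\partial(\Sigma_g\times D^2)$, the framing being $-n$ relative to the product framing (so that the core sphere $S$ has $S\cdot S=-n$). Hence $\partial X_{g,n}$ is $(-n)$--surgery on the fibre $c$ inside $\Sigma_g\times S^1$; deleting the surgery solid torus shows $\partial X_{g,n}=(\Sigma_{g,1}\times S^1)\cup_T V$, and one checks this is Seifert fibred over $\Sigma_g$ with exactly one exceptional fibre, of multiplicity $n$, lying over $p$ (with Seifert data making $H_1\cong\Z^{2g}$; for $n=1$ there is no exceptional fibre and one gets a genuine circle bundle). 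The two degenerate cases are $g=0$, where $\partial X_{g,n}\cong S^3$, and $n=0$, where $\partial X_{g,0}\cong\#_{2g}(S^1\times S^2)$.

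For the main range $g\geq 1$, $n\geq 1$ I would use the classical rigidity of Seifert fibred $3$--manifolds over large base orbifolds: the Seifert fibration is unique up to isotopy (Waldhausen), so every orientation--preserving self--diffeomorphism of $\partial X_{g,n}$ is isotopic to a fibre--preserving one. A fibre--preserving diffeomorphism covers a diffeomorphism $\bar f$ of the base $\Sigma_g$ fixing the exceptional point $p$, and, up to isotopy, is assembled from $\bar f$, a \emph{vertical twist} supported over a finite collection of embedded loops in $\Sigma_g\setminus\{p\}$, and possibly a reversal of the fibre orientation. So it suffices to extend these three model types over $X_{g,n}$.

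The extensions are then concrete. After an isotopy I may take $\bar f$ to be the identity near the plumbing disc; its fibre--preserving lift is then supported in $\Sigma_{g,1}\times S^1\subset\partial(\Sigma_g\times D^2)$ and extends as $\bar f\times\mathrm{id}_{D^2}$ over $\Sigma_g\times D^2$ and by the identity over the $2$--handle. A vertical twist along a loop $\gamma$ disjoint from the plumbing disc is supported in $A\times S^1$ for an annular neighbourhood $A\subset\Sigma_g$ of $\gamma$, and extends over $A\times D^2\subset\Sigma_g\times D^2$ by $(s,z)\mapsto(s,\psi(s)z)$ for a suitable map $\psi\colon A\to S^1$ making one full turn across $A$ (and the identity on $\partial A$), again by the identity over the $2$--handle. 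The fibre reversal $(\sigma,\theta)\mapsto(\sigma,\bar\theta)$, composed with an orientation--reversing involution $r$ of $\Sigma_g$ that is a linear reflection near the plumbing disc, extends over $\Sigma_g\times D^2$ as $(\sigma,z)\mapsto(r(\sigma),\bar z)$ and over the $D^2$--bundle over $S^2$ with Euler number $-n$ as the lift of complex conjugation on $\PO$, which preserves the Euler number. Since general position keeps every isotopy and every support away from the plumbing $D^4$, the two pieces of each construction automatically agree on the overlap. In the degenerate cases, $g=0$ is immediate because $\pi_0\mathrm{Diff}^+(S^3)$ is trivial, and for $n=0$ I would instead take an explicit generating set for $\pi_0\mathrm{Diff}^+(\#_{2g}(S^1\times S^2))$ and check each generator directly against the plumbing $X_{g,0}$.

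I expect the main obstacle to be the middle step: assembling the $3$--manifold mapping class group input correctly — that every boundary diffeomorphism is isotopic to a fibre--preserving one, together with a clean normal form for the fibre--preserving diffeomorphisms — uniformly over the Seifert cases, and then dealing with the reducible case $n=0$ on its own, which is the most delicate point since $\#_{2g}(S^1\times S^2)$ is not Seifert fibred. The remaining work — tracking how the hands--on extensions interact with the plumbing $2$--handle so that they genuinely glue — is routine but needs care.
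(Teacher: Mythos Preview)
Your approach is correct and covers the same ground, but the execution differs from the paper's in a notable way. You invoke Waldhausen's uniqueness of Seifert fibrations globally, pass to a normal form for fibre-preserving diffeomorphisms (base diffeomorphism, vertical twists, optional fibre reversal), and then extend each generator of that normal form by an explicit construction. The paper instead works more locally: it chooses a standard cut system $C\cup C'$ on $\Sigma_g$, lifts it to vertical incompressible tori $T\cup T'$, uses the standard isotopy theorems for incompressible tori and annuli in Seifert spaces to push $\phi(T\cup T')$ back to vertical position, extends $\phi$ over the obvious disks in $\Sigma_g\times D^2$ bounded by the regular fibres near $T\cup T'$, and thereby reduces the remaining extension problem to the $D^2$--bundle over $S^2$ alone, where Bonahon's computation of $\pi_0\mathrm{Diff}^+(L(n,1))$ finishes it. Your route buys a structural picture of the whole boundary mapping class group and very concrete extensions; the paper's route avoids stating the full normal form and instead isolates exactly one nontrivial piece, the lens space, where a single citation suffices. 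For the degenerate $n=0$ case the paper is quicker: it simply observes $X_{g,0}\cong\natural_{2g}(S^1\times D^3)$ and implicitly uses that every diffeomorphism of $\#_{2g}(S^1\times S^2)$ extends over the handlebody, rather than checking a generating set by hand as you propose.
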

\begin{proof}
The case when $n=0$ is clear since $X_{g,0}$ is the boundary sum of $2g$ copies of $S^1\times D^3,$ 
if $g>0.$ If $g=0$ then $\partial X_{0,n}=S^3$ and thus the lemma is also clear in this case.

If $n\not=0$ and $g>0$ then $\partial X_{g,n}$ is a Seifert fibered space over $\Sigma_g$ with one singular fiber. Let $\phi$ be a diffeomorphism of $\partial X_{g,n}.$ Let $C=\{c_1,\ldots, c_g\}$ and $C'=\{c_1',\ldots, c_g'\}$ be collections of curves on $\Sigma_g$ such that the curves within each collection are disjoint and $c_i\cap c_j'$ is exactly one point if $i=j$ and empty otherwise. We moreover assume that no curve in $C$ or $C'$ goes through the singular point and $C$ and $C'$ are chosen so that $\Sigma_g\setminus (C\cup C')$ is a disk. Let $T$ and $T'$ be the preimages of $C$ and $C'$ in $\partial X_{g,n}.$ These are collections of incompressible tori. Thus $\phi(C)$ is also a collection of incompressible tori. It is well known in this situation that the collection $\phi(C)$ can be isotoped to be vertical (that is a union of fibers in the Seifert fibration), \cite{Hatcher3mdfNotes, Jaco80}. Extending this isotopy we can assume $\phi(T)$ is a collection of vertical tori. Each component of $T'$ in $\partial X_{g,n}\setminus T$ are incompressible and boundary incompressible annuli. Their images will also be such annuli and thus can be isotoped to be horizontal or vertical. Since a horizontal annulus in $\partial X_{g,n}\setminus T$ would be part of a horizontal torus in $\partial X_{g,n},$ which does not exist, we can further isotop $\phi$ so that $\phi(T')$ and $\phi(T)$ are vertical tori. At this point it is clear that $\phi$ can be further isotoped to preserve the Seifert fibration on $T\cup T'$ and even on a neighborhood of $T\cup T'.$

As the tori in $T\cup T'$ consist of regular fibers we can assume they are contained in the $\Sigma_g\times D^2$ part of $X_{g,n}$ and each such fiber in this part bounds an obvious disk in $X_{g,n}.$ We can now extend $\phi$ over those disks whose boundaries are fibers in a neighborhood of $T\cup T'.$ It remains to extend $\phi$ over the $D^2$-bundle over $S^2.$ The boundary of this bundle is $L(n,1).$ In \cite{Bonahon83} it was shown that, up to isotopy,  there is precisely one non-identity diffeomorphism of $L(n,1).$   Recall $L(n,1)$ is the double branched cover of $S^3$ with branch set the boundary of the unknotted annulus or M\"obius band with $-n$  half twists. The nontrivial diffeomorphism of $L(n,1)$ is simply the covering automorphism of this two fold cover. Since the disk bundle bounded by $L(n,1)$ is the two fold branched cover of $B^4$ branched over the interior of the above mentioned annulus or M\"obius band with its interior pushed into the interior of $B^4$ it is clear that this diffeomorphism extends over the disk bundle.  Thus we may extend $\phi$ over the rest of $X_{n,g}.$
\end{proof}

\section{Stein fillings}
In this section we construct our examples of contact structures with infinitely many distinct, simply connected Stein fillings all of which are homeomorphic. We begin by recalling a well-known relation between Lefschetz fibrations and Stein manifolds.
\begin{lem}\label{lem:comp}
Let $f:X\to \PO$ be a Lefschetz fibration of a 4-manifold that admits a section $S$ of square $-n.$ If $F$ is a generic fiber of the fibration then $X-(F\cup S)$ admits the structure of a Stein manifold. Moreover, the Stein manifold fills the contact 3-manifold $(Y,\xi)$ where $\xi$ is supported by an open book decomposition with page $F\setminus \nu(S\cap F),$ where $\nu(S\cap F)$ is a neighborhood of $S\cap F$ in $F,$ and monodromy $n$-right handed Dehn twists about a curve parallel to the boundary.  
\end{lem}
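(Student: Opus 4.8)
The plan is to assemble this from the now-standard correspondence, due to Loi and Piergallini and made precise by Akbulut and Ozbagci, between positive allowable Lefschetz fibrations over $D^2$ and Stein domains, together with a short computation of the monodromy of the open book induced on the boundary. I work with the compact model $W:=X\setminus\nu(F\cup S)$, whose interior is $X\setminus(F\cup S)$.

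First I would pass to a fibration over the disk. Restricting $f$ over $\PO$ minus a small disk about a regular value gives a Lefschetz fibration $X\setminus\nu(F)\to D^2$ with closed fiber $F$, the same critical points as $f$, and with $S$ restricting to a section $S_0$ of square $-n$. Since a section meets every fiber, singular fibers included, in a single smooth point, after an isotopy $S$ is disjoint from the critical points and from neighborhoods of the vanishing cycles. Deleting $\nu(S_0)$ as well then produces a Lefschetz fibration $W\to D^2$ with \emph{bounded} fiber $P:=F\setminus\nu(S\cap F)$, whose vanishing cycles are the unchanged curves $c_1,\dots,c_b$ of $f$, now viewed as curves in $P$. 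These are not null-homotopic in $P$ --- in the situations to which we apply the lemma they are non-separating, being vanishing cycles of an elliptic fibration --- so $W\to D^2$ is a positive allowable Lefschetz fibration.

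By the Loi--Piergallini theorem, in the form given by Akbulut and Ozbagci, the total space of a positive allowable Lefschetz fibration over $D^2$ carries a Stein structure for which the induced contact structure on the boundary is supported by the open book with page the fiber and monodromy the product $t_{c_1}\cdots t_{c_b}$ of right-handed Dehn twists about the vanishing cycles. (Equivalently, in Eliashberg's picture $W$ is $P\times D^2$ with a $2$-handle attached along each $c_i$ with framing $-1$ relative to the page; since each $c_i$ sits essentially in the page it can be made Legendrian with Thurston--Bennequin invariant $0$, so these framings satisfy Eliashberg's criterion.) This already proves the first assertion and identifies the $3$-manifold $Y=\partial W$; it remains only to recognize the monodromy. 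Capping $W\to D^2$ off again reconstructs $X\to\PO$ together with its $(-n)$-section, and it is the defining feature of this dictionary that a Lefschetz fibration over $\PO$ with fiber $F$ carrying a section of self-intersection $-n$ corresponds to a positive factorization $t_{c_1}\cdots t_{c_b}=t_\delta^{n}$ in the mapping class group of $P$ fixing $\partial P$, where $\delta$ is a curve parallel to $\partial P$ (see the books of Gompf--Stipsicz and Ozbagci--Stipsicz; the case $n=1$, $F=T^2$ is the chain relation $(t_at_b)^6=t_\delta$ realizing $E(1)\to\PO$). Hence the open book above has monodromy $t_\delta^{n}$, that is, $n$ right-handed Dehn twists about a boundary-parallel curve, as claimed.

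I expect the only genuinely delicate point to be this last identification: one must fix the orientation conventions so that a section of square $-n$ yields exactly $t_\delta^{n}$, with the right-handed sign and the correct power, rather than $t_\delta^{-n}$ or an off-by-one variant. The hypothesis that the $c_i$ be essential in $P$, which is what guarantees the existence of the Stein structure, is not an obstacle here, since it holds in every case to which we apply the lemma.
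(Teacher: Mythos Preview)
Your argument is correct and follows exactly the route the paper indicates: the paper does not give a proof at all, merely noting that the lemma ``follows from the handlebody description of Lefschetz fibrations and the relation between open books and contact structures,'' with references to Akbulut--Ozbagci, Etnyre, and Ozbagci--Stipsicz. Your write-up is a faithful unpacking of precisely that machinery.

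Two small remarks. First, your parenthetical that the vanishing cycles are ``vanishing cycles of an elliptic fibration'' is a slip: in the applications here the relevant fibration is the horizontal genus $2g+n-1$ fibration on $E(n)_K$, not the elliptic one; the point you need (that the vanishing cycles are homotopically essential in the punctured fiber, so that the PALF is allowable) still holds, but for a different reason. Second, you are right to flag allowability as an implicit hypothesis not stated in the lemma; the paper simply absorbs this into ``well known,'' and it causes no trouble in the cases used.
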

This well known lemma follows from the handlebody description of Lefschetz fibrations and the relation between open books and contact structures, see \cite{AkbulutOzbagci01, Etnyre06, OzbagciStipsicz04}.

With the above preparation we are now ready to prove our main result.
\begin{thm}
Let $\xi_{g,m}$ be the contact structure on the Seifert fibered space over a surface of genus $g>4$ with one singular fiber of multiplicity $2m$
 supported by the open book with page a surface of genus $g$ with one boundary component and monodromy $2m$ positive Dehn twists about a curve parallel to the boundary. The contact structure is Stein fillable by infinitely many homeomorphic but non-diffeomorphic simply connected Stein manifolds. 
\end{thm}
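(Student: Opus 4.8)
The plan is to apply the Lefschetz fibration machinery of Section 2 together with the knot surgery construction, using Fintushel--Stern's infinite families of homeomorphic but non-diffeomorphic $4$-manifolds as the source of the distinct Stein fillings. First I would fix a fibered knot $K$ of genus $g$ (so that $2g$ is even; one may take any such $K$, for instance with $K$ ranging over an appropriate family) and consider the Lefschetz fibration $h\colon E(n)_K\to\PO$ of fiber genus $2g+n-1$ from the Fintushel--Stern lemma. By Lemma~\ref{lem:construction}, $E(n)_K$ admits a sphere section $S$ of square $-2$ with simply connected complement, and in fact a second sphere section disjoint from $S$. Applying Lemma~\ref{lem:comp} with this section $S$ (so $n=2$ in the notation of that lemma), the open book with page $F\setminus\nu(S\cap F)$ — a once-punctured surface of genus $2g+n-1$ — and monodromy two right-handed boundary-parallel Dehn twists supports a contact structure $\xi$ on some Seifert fibered space $Y$ over a surface of genus $2g+n-1$ with one singular fiber of multiplicity $2$ times something; I would choose $n$ and the genus parameter to match the stated $\xi_{g,m}$ (relabeling so the page genus is the ``$g$'' in the theorem and the multiplicity is ``$2m$''), and $X_K := E(n)_K\setminus(F\cup S)$ is then a Stein filling of $(Y,\xi)$.

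The next step is to arrange that these Stein fillings are simply connected, mutually homeomorphic, and pairwise non-diffeomorphic as $K$ varies over a suitable infinite family. Simple connectivity of $X_K = E(n)_K\setminus(F\cup S)$ follows because $E(n)_K$ is simply connected, removing a fiber $F$ does not change this (the meridian of $F$ is killed), and $S$ has simply connected complement by Lemma~\ref{lem:construction}; removing both $F$ and $S$ still leaves something simply connected since $S$ still has a section (hence a sphere meeting it once) in the complement of $F$, killing the meridian of $S$ as in the proof of Lemma~\ref{lem:construction}. For the homeomorphism and non-diffeomorphism statements I would invoke Fintushel--Stern: the closed manifolds $E(n)_K$ for $K$ in an infinite family of fibered knots with distinct Alexander polynomials are all homeomorphic (same $\pi_1$, $\chi$, $\sigma$, type) but pairwise non-diffeomorphic, detected by Seiberg--Witten invariants. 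Since the complement $X_K$ is obtained from $E(n)_K$ by deleting a fixed-topological-type pair $(F\cup S)$ with prescribed boundary, and conversely $E(n)_K$ is recovered from $X_K$ by gluing back a standard piece, a diffeomorphism $X_K\cong X_{K'}$ restricting suitably on the boundary would extend to a diffeomorphism of the closed manifolds; the needed boundary-extension flexibility is exactly what Lemma~\ref{lem:extend} provides (the piece $F\cup S$ is a regular neighborhood deformation-retracting appropriately, and its complement's boundary carries enough diffeomorphisms). One must also check the $X_K$ are pairwise homeomorphic, which again follows from their being complements of homeomorphic pieces in homeomorphic ambient manifolds.

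The main obstacle, and the step requiring care, is distinguishing the $X_K$ smoothly: one cannot directly apply Seiberg--Witten invariants to the open (non-compact, or compact-with-boundary) Stein pieces, so the argument must be that any diffeomorphism $X_K\to X_{K'}$ would induce, via Lemma~\ref{lem:extend} extending its boundary behavior over the glued-back $X_{2g+n-1,2}$-type plumbing neighborhood of $F\cup S$, a diffeomorphism $E(n)_K\to E(n)_{K'}$, contradicting Fintushel--Stern. I would need to verify that the boundary $\partial X_K$ is precisely $\partial X_{g',n'}$ for the relevant plumbing parameters (a Seifert fibered space over $\Sigma_{2g+n-1}$ with one multiplicity-$2$ singular fiber, matching the hypothesis $g>4$ after relabeling so the page genus exceeds $4$), so that Lemma~\ref{lem:extend} applies verbatim; the genus bound $g>4$ presumably enters to guarantee the relevant homeomorphism classification (e.g.\ via Freedman's theorem the $X_K$ being homeomorphic needs their algebraic topology to be simple, and the bound ensures the pieces and gluings behave), and to ensure infinitely many of the $E(n)_K$ are genuinely non-diffeomorphic for the chosen $n$. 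Finally I would remark, as the introduction promises, that varying $n$ within the construction changes $(\chi,\sigma)$ of the fillings, yielding the finitely-many-$(\chi,\sigma)$-pairs statement, and that combining with Ozbagci--Stipsicz gives infinitely many homeomorphism types of fillings as well.
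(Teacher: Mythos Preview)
Your approach is essentially identical to the paper's: build $S_{n,K}=E(n)_K\setminus(F\cup S)$ as a Stein filling via Lemma~\ref{lem:comp}, use the second section from Lemma~\ref{lem:construction} for simple connectivity, and use Lemma~\ref{lem:extend} to promote a hypothetical diffeomorphism of fillings to one of the closed manifolds $E(n)_K$, contradicting Fintushel--Stern. Two points need sharpening.

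First, your homeomorphism step (``complements of homeomorphic pieces in homeomorphic ambient manifolds'') is not a valid argument as stated --- think of knot complements in $S^3$. The paper instead invokes Boyer's classification of simply connected $4$-manifolds with prescribed boundary \cite{Boyer86}, which is the correct tool: once you know the $S_{n,K_i}$ are simply connected with fixed boundary and the right intersection-form data, Boyer gives the homeomorphism directly.

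Second, the section $S$ has square $-2$, so Lemma~\ref{lem:comp} produces only the contact structures with monodromy \emph{two} boundary twists, i.e.\ $m=1$; you cannot ``choose $n$ and the genus parameter'' to hit arbitrary $m$. The paper obtains the remaining cases, and the infinitely many contact $3$-manifolds promised, by taking fiber sums of the $E(n)_{K_i}$, a step you omit. Finally, the bound $g>4$ is not there for Freedman-type reasons as you speculate, but to guarantee (via Kanenobu \cite{Kanenobu81}, which requires knot genus at least $2$) that infinitely many fibered knots with distinct Alexander polynomials exist realizing the given page genus $g=n-1+2g'$.
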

\begin{proof}
From Lemma~\ref{lem:construction} we know that for a genus $g$ fibered knot $K$ in $S^3,$ $E(n)_K$ has a genus $n-1+2g$ Lefschetz fibration with a section $S$ of square $-2$ and whose complement is simply connected. It is clear that $X_{n-1+2g,-2}$ embeds in $E(n)_K$ as a neighborhood of a fiber and $S.$ From Lemma~\ref{lem:comp} we know that $S_{n,K}=E(n)_K\setminus X_{n-1+2g,-2}$ is a Stein manifold filling the contact structure $\xi_{n-1+2g,1}$ described in the theorem. Moreover, $S_{n,K}$ is seen to be simply connected by the existence of a section of $E(n)_K$ disjoint from $S.$ 

If $K_i$ are a sequence of fibered genus $g$ knots in $S^3$ with distinct Alexander polynomials, which by \cite{Kanenobu81} always exist if $g>1,$ then the $E(n)_{K_i}$'s are all mutually non-diffeomorphic as they are distinguished by their Seiberg-Witten invariants, see \cite{FintushelStern98}. Thus by Lemma~\ref{lem:extend} we conclude that the $S_{n,K_i}$ are all mutually non-diffeomorphic Stein fillings of $\xi_{n-1+2g,1}.$ It follows easily from \cite{Boyer86} that these manifolds are all homeomorphic. 

The existence of infinitely many such examples of contact 3-manifolds, as asserted in the theorem, now follows by taking fiber sums of the $E(n)_{K_i}$'s.
\end{proof}

\def\cprime{$'$} \def\cprime{$'$}

\end{document}